\begin{document}
\title{On Upper Bounds for the Depth of some Classes of Polyhedra}
\author{ Mojtaba~Mohareri\footnote{Department of Pure Mathematics, Ferdowsi University of Mashhad, P.O.Box 1159-91775, Mashhad, Iran,  \ Email: {\tt m.mohareri@stu.um.ac.ir } ORCID:0000-0002-6725-2160} and  Behrooz~Mashayekhy\footnote{Department of Pure Mathematics, Ferdowsi University of Mashhad, P.O.Box 1159-91775, Mashhad, Iran,  \ Email: {\tt bmashf@um.ac.ir } ORCID:0000-0001-5243-0641 } \footnote{Corresponding Author.}} 
\begin{abstract}
 In this  paper,  we present upper bounds for the depth of some classes of polyhedra, including: polyhedra with finite fundamental group, polyhedra $P$ with abelian or free $\pi_1 (P )$  and finitely generated $H_i (\tilde{P};\mathbb{Z})$, 2-dimensional polyhedra with abelian or free fundamental group, and 2-dimensional polyhedra with elementary amenable fundamental group $G$ with finite cohomological dimension $cd(G)$. Furthermore, we provide some examples to show that some of these bounds are sharp. 
\end{abstract}
\begin{keyword}
Homotopy domination, Homotopy type,  Polyhedron, CW-complex.
\end{keyword}
\begin{AMS}
55P15, 55P55, 55P20, 54E30, 55Q20.
\end{AMS}
\section{Introduction and Motivation}
In 1979, at the International Topological Conference in Moscow, K. Borsuk introduced the capacity and depth in the shape category of compacta, together with some open questions \cite{So} (see \cite{B4,Dy,MS} for basic notions and results of shape theory).

Recall that a \textit{domination} in a given category $\mathcal{C}$ is a morphism $f : X \to Y$ , $X, Y \in Obj \mathcal{C}$, for which there exists a morphism $g : Y \to X$ in $\mathcal{C}$ such that $f\circ g = \mathrm{id}_Y$. Then we say that $Y$ is dominated by $X$, and we write $Y \leqslant X$ or $X \geqslant Y$. Moreover, $X < Y$ will denote that $X \leqslant Y$ holds but $Y \leqslant X$ fails (see, for example, \cite{7}).

In the following, $\mathcal{C}$ is the homotopy category of CW-complexes and homotopy classes of cellular maps between them or the shape category of compacta (pointed or unpointed).

Following K. Borsuk (cf. \cite{So}), define the capacity $C(A)$ of an $A \in Obj \mathcal{C}$
as the cardinality of the class of isomorphism classes of all the $X \in Obj \mathcal{C}$
such that $X \leqslant A$.

A system $X_k  < \cdots  < X_1 <X_0 =A$, where $X_i \in Obj \mathcal{C}$ for $i =0, 1, \ldots , k$, is called
a chain of length $k$ for $A \in Obj \mathcal{C}$. The depth $D(A)$ of $A$ is the least upper
bound of the lengths of all chains for $A$. If this upper bound is infinite, we
write $D(A) = \mathcal{N}_0$ (cf. \cite{So}).

It is clear that $D(A)\leq C(A)$ for each $A\in Obj\mathcal{C}$.

In the following,  every polyhedron and  CW-complex $P$ is assumed to be finite and connected.  Also, every map between two CW-complexes is assumed to be cellular. Since every polyhedron is homotopy equivalent to a finite CW-complex of the same dimension, and vice versa, we use the terms ``polyhedron'' and ``finite CW-complex'' interchangeably. We assume that the reader is familiar with the basic notions and facts of homotopy theory.

In 1979, K. Borsuk stated the question: Is there a compactum with infinite capacity, but finite depth?  (see, for example, [\cite{So}, Question 8]).

In the above problem the notions of shape and shape domination can be replaced by the notions homotopy type and homotopy domination, respectively. Indeed, by the known results in shape theory (from \cite{Ed,Ha}, \cite[Theorems 2.2.6, and 2.1.6]{Dy}) we get that, for each polyhedron $P$, there is a 1-1 functorial correspondence between the shapes of compacta shape dominated by $P$ and the homotopy types of CW-complexes (not necessarily finite) homotopy dominated by $P$ (in both pointed and unpointed cases).

Recall that each space homotopy dominated by a polyhedron has the homotopy type of a CW-complex, not necessarily finite (see \cite{Wall}). Thus the Borsuk problem is equivalent to: Is there a polyhedra with infinite capacity, but finite depth?

In this paper we consider dominations of a polyhedron in the category of CW-complexes and homotopy classes of cellular maps between them. 

 In \cite{3}  D. Ko\l{}odziejczyk  showed that the answer to the Borsuk question is positive: For every non-abelian poly-$\mathbb{Z}$-group $G$ and an integer $n\geq 3$, there exists a polyhedron $P$ such that $\pi_1 (P)\cong G$, $\dim P=n$, $C(P)$ is infinite, but $D(P)$ is finite. Thus, there exist polyhedra with polycyclic or nilpotent fundamental groups with this property. She then proved in \cite{7} that every polyhedron with virtually polycyclic fundamental group has finite depth. Recently, in \cite{D2018}, Ko\l{}odziejczyk  showed the same for all 2-dimensional polyhedra whose fundamental groups are elementary amenable with finite cohomological dimension or limit groups. 
 
When it comes to computation of the capacity and depth of polyhedra,  we refer to \cite{Mo,Mo1,Ab}. In \cite{Mo}, it is proved that the capacity and depth of $\bigvee_{n\in I} (\vee_{r_n} S^n)$ is equal to $\prod_{n\in I}(r_n +1)$ and $\sum_{n\in I}r_n $, respectively, where $\vee_{r_n} S^n$ denotes the wedge sum of $r_n$ copies of $S^n$, $I$ is a finite subset of $\mathbb{N}$ and $r_n \in \mathbb{N}\cup \{ 0\}$. In \cite{Mo1}, it is shown that for $n,m\geq 1$, the capacity of $S^n\times S^m$ is equal to 4 if $n\neq m$ and it is equal to 3 if $n=m$ but $D(S^n \times S^m)=2$ in general. Also, we investigated the capacity and depth of  lens spaces and $\mathbb{Z}_n$-complexes, i.e., connected finite 2-dimensional CW-complexes with finite cyclic fundamental group $\mathbb{Z}_n$. Finally, in \cite{Ab} it is shown that the capacity and depth of every compact orientable surface of genus $g\geq 0$ is equal to $g+2$ and  the capacity and depth of a compact non-orientable surface of genus $g > 0$ is $[\frac{g}{2}] + 2$.
 
In the present paper, we give an upper bound for the depth of each of the following classes of polyhedra: polyhedra with finite fundamental group, polyhedra $P$ with abelian or free $\pi_1 (P )$  and finitely generated $H_i (\tilde{P};\mathbb{Z})$, where $\tilde{P}$ is the universal covering space of $P$,  2-dimensional polyhdera with abelian or free fundamental group, and 2-dimensional polyhdera with elementary amenable fundamental group $G$ with $cd(G)<\infty$. Furthermore, we provide some examples to show that some of such bounds are sharp. As we mentioned above, it has been shown in \cite{4,7,D2018} that the depth of each of the following classes of polyhedra is finite: polyhedra with finite fundamental group, polyhedra $P$ with abelian or free $\pi_1 (P )$ and finitely generated $H_i (\tilde{P};\mathbb{Z})$, where $\tilde{P}$ is the universal covering space of $P$,  2-dimensional polyhdera with abelian or free fundamental group, and 2-dimensional polyhdera with elementary amenable fundamental group $G$ with $cd(G)<\infty$. It should be noted that the presented proofs are not in such a way one can use to compute their depth. In the present paper, we prove the same facts in a different approach which is significant from two points of view: first, our proofs are explicit and straightforward, and second, they are such that one can derive upper bounds for the depth.  
\section{Preliminaries}
Let us recall some definitions.

A homomorphism $g :G \to H$ of groups is an r-homomorphism if there exists a homomorphism $f :H \to G$ such that $g\circ f = id_H$. Then $H$ is called an \textit{r-image} of $G$.

Let $H$ be a subgroup of a group $G$. Then a  homomorphism $r:G\to H$ is said to be a \textit{retraction} if the inclusion  homomorphism $i:H\hookrightarrow G$ is a right inverse of $r$, i.e. $r(x)=x$ for all elements $x\in H$. Then $H$ is called  a \textit{retract} of $G$. By a \textit{proper retract} of $G$, we mean a retract $H$ of $G$ such that $H\neq G$.

The following fact follows from the definition.
\begin{lemma}\label{Semi1}
If $H$ is an r-image of $G$, then  ${\rm im}(f)$ is a retract of $G$.
\end{lemma}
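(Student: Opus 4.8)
The plan is to exhibit an explicit retraction of $G$ onto $\mathrm{im}(f)$ directly from the splitting data. By hypothesis we have homomorphisms $g : G \to H$ and $f : H \to G$ with $g \circ f = \mathrm{id}_H$, and the natural candidate for a retraction is the composite endomorphism $r := f \circ g : G \to G$. First I would record two immediate consequences of the identity $g \circ f = \mathrm{id}_H$: that $f$ is injective (hence an isomorphism onto the subgroup $\mathrm{im}(f) \leq G$), and that $g$ is surjective onto $H$. The surjectivity of $g$ gives $\mathrm{im}(r) = f(g(G)) = f(H) = \mathrm{im}(f)$, so the image of the candidate map is exactly the subgroup we want to realize as a retract.

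The key step is the observation that $r$ is idempotent. This is a one-line computation using associativity and the splitting identity:
\[
r \circ r = (f \circ g) \circ (f \circ g) = f \circ (g \circ f) \circ g = f \circ \mathrm{id}_H \circ g = f \circ g = r.
\]
Idempotency of an endomorphism forces it to fix its image pointwise: if $x \in \mathrm{im}(r)$, write $x = r(y)$, whence $r(x) = r(r(y)) = r(y) = x$. Thus $r(x) = x$ for every $x \in \mathrm{im}(f)$.

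Viewing $r$ now as a homomorphism $G \to \mathrm{im}(f)$ (which is legitimate since $\mathrm{im}(r) = \mathrm{im}(f)$), the inclusion $i : \mathrm{im}(f) \hookrightarrow G$ is a right inverse of $r$, because $r \circ i$ is precisely the identity on $\mathrm{im}(f)$ by the previous paragraph. By the definition of retraction recalled above, this says exactly that $\mathrm{im}(f)$ is a retract of $G$, completing the argument.

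I do not anticipate a genuine obstacle here; the statement is formal and follows entirely from unwinding the two definitions. The only point requiring a little care is the bookkeeping of domains and codomains, namely keeping the endomorphism $r : G \to G$ conceptually distinct from its corestriction $r : G \to \mathrm{im}(f)$, and noting at the outset that $\mathrm{im}(f)$ is a bona fide subgroup of $G$ as the image of a group homomorphism so that the notion of ``retract'' applies to it.
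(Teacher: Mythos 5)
Your proof is correct, and it is exactly the standard unwinding that the paper has in mind when it states that the lemma ``follows from the definition'' (the paper supplies no explicit proof). The idempotent $r = f\circ g$ with $\mathrm{im}(r)=\mathrm{im}(f)$, fixing its image pointwise, is the canonical retraction, so there is nothing to add.
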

Suppose that $N\unlhd G$ and there is a subgroup $H$ such that $G=HN$ and $H\cap N=1$; then $G$ is said to be the \textit{(internal) semidirect product} of $N$ and $H$ and denoted by $G=H\ltimes N$ or $G=N\rtimes H$. 

 The lemma below can be easily  deduced from the definition. 
\begin{lemma}\label{Semi2}
Let $G$ be a group. Then a subgroup $H$ of $G$ is a retract of $G$ if and only if $G=H\ltimes N$ for a normal subgroup $N$ of $G$.
\end{lemma}
Let $H$ be a subgroup of a group $G$. Then a subgroup $K$ is called a \textit{complement} of $H$ in $G$ if $G=HK$ and $H\cap K=1$. By Lemma \ref{Semi2}, a subgroup $H$ of $G$ is a retract of $G$ if and only if it has a normal complement in $G$.

Recall that a group $G$ is called \textit{Hopfian} if every epimorphism $f :G\to G$ is an automorphism (equivalently, $N = 1$ is the only normal subgroup for which $G/N\cong G$). Finitely generated abelian groups and free groups of finite rank are examples of Hopfian groups. 

Elementary amenable groups is the smallest class of groups that contains all abelian and all finite groups, and is closed under extensions and directed unions (see  \cite[p. 223]{Bri}).
The \textit{Hirsch length}, $h(G)$, of an elementary amenable group is finite and equal to $n\geq 0$, if $G$ has a series $1=H_0 \lhd \cdots \lhd H_r =G$ in which the factors are either locally finite or infinite cyclic, and exactly $n$ factors are infinite cyclic. In all other cases, $h(G)=\infty$ (see  \cite[p. 223]{Bri}). Recall that a group is \textit{locally finite} if all its finitely generated subgroups are finite. 
 
By a \textit{cohomoligcal dimension} of a group $G$ we mean (see  \cite{Br})
$$cd(G)=\sup \{ n: H^n (G,M)\neq 0; \mathrm{for some}; \mathbb{Z}G-\mathrm{module} M\}.$$ 

A \textit{series} of a group $G$ is a chain of subgroups $G=G_0 > G_1 >\cdots > G_k=1$. If each $G_i$ is  normal  in $G$, it is called a \textit{normal series} of $G$. The length of the series is the number of strict inclusions. 

\section{Splitting normal series}
In this section, we introduce a special normal series of a group called  splitting normal series and determine  the maximum length of such series of some well-known groups.
\begin{definition}\label{def1}
Let $G$ be a group.
\begin{enumerate}
\item
By a  \textit{splitting normal series} of $G$ we mean a normal series $G=G_0 > G_1 > \cdots > G_k=1$ of $G$ in which each $G_i$ has a complement in $G$. We define $n_1(G)$ to be the maximum length of a splitting normal series of $G$.
\item
By a  \textit{retract series} of $G$ we mean a series  $G=G_0 > G_1 > \cdots > G_k=1$ of $G$ in which each $G_i$  is a retract of $G$.  We define $n_2(G)$ to be the maximum length of a retract series of $G$.
\item
We define $n_3(G)$ inductively: $n_3(1)=0$ and $n_3(G)=1+\sup \{ n_3 (H): H\; \text{is a proper retract of}\; G\}$.
\end{enumerate} 
\end{definition} 
In the following, we state relationships between $n_i (G)$'s for $i=1,2,3$.
\begin{proposition}\label{pro1}
Let $G$ be a group. Then $n_1 (G)=n_2(G)$. Moreover, if $n_1 (G)<\infty$, then $n_1 (G)=n_2 (G)=n_3 (G)$.
\begin{proof}
It is easy to see that a  splitting normal series gives rise to a retract series, so we get $n_1(G) \leq n_2(G)$. 

Assume that $G=G_0 > G_1 > \cdots > G_k=1$ is a  retract series of $G$. Let $N_i$ be a normal complement of $G_i$ in $G$.  Since $G_2 < G_1$ and $G=G_2 N_2$, we get $G_1 = G_2 (N_2 \cap G_1)$. Then $N_1(N_2 \cap G_1)$ is also a normal complement of $G_2$ in $G$, so we can redefine $N_2 = N_1(N_2 \cap G_1)$, etc. This leads to the splitting normal series $G=N_k  > \cdots >N_1 > N_0=1$ of $G$ (note that $N_1=N_2$ imples that $N_2 \cap G_1 \subseteq N_1 \cap G_1 =1$ which implies that $G_1=G_2$ which is a contradiction). Hence $n_2(G)\leq n_1(G)$ and we have $n_1(G)=n_2(G)$ as claimed.

Now assume that $n_1 (G)<\infty$. It can be easily seen that $n_3 (G)$ provides the smallest possible function of $G$, say $n(G)$, with this property: if $H$ is a proper retract 	of $G$, then $n(H)\lneqq n(G)$. Hence, $n_3 (G)\leq n_1 (G)$ for every group $G$. Moreover, since $n_1 (G)<\infty$,  there exists a splitting normal series $G=G_{n_1(G)} >  \cdots >G_1 > G_{0}=1$ of $G$ with the maximum length. Trivially, we have $n_3 (G_1)=1$. Then $n_3 (G_2)\geq 1+n_3 (G_1)=2$. Similarly, $n_3 (G_3)\geq 1+n_3 (G_2)\geq 3$. By continuing this process, we see that $n_3 (G)=n_3 (G_{n_1 (G)})\geq 1+n_3 (G_{n_1 (G)-1})\geq 1+n_1 (G)-1=n_1 (G)$. So  $n_3 (G)\geq n_1 (G)$, hence $n_1 (G)=n_3 (G)$.
\end{proof}
\end{proposition}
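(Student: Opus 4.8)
The plan is to exploit a complementation duality between the two kinds of series. In a splitting normal series every term $G_i$ is normal and complemented, so by Lemma~\ref{Semi2} each complement is a retract of $G$; conversely, in a retract series every term $G_i$ has a normal complement $N_i$, and these $N_i$ are themselves normal and (being complemented by $G_i$) admissible terms of a splitting normal series. Reading a series of one type through complements, and reversing the order of the chain (the complement of $G_0=G$ is trivial, that of $G_k=1$ is $G$), converts it into a series of the other type; this is how I would establish both inequalities giving $n_1(G)=n_2(G)$. The one genuine subtlety is that an arbitrary family of complements need not be nested, so to preserve the \emph{same} length I must choose them compatibly. I would do this by the recursive correction already indicated in the setup: given consecutive terms $G_{i+1}<G_i$ together with a complement $N_{i+1}$ of $G_{i+1}$, Dedekind's modular law yields $G_i=G_{i+1}(N_{i+1}\cap G_i)$, and replacing $N_{i+1}$ by $N_i(N_{i+1}\cap G_i)$ produces a complement of $G_{i+1}$ that now contains the complement $N_i$ of the larger term; iterating upward makes the whole chain nested. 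Strictness survives because an equality of two consecutive corrected complements would force, after intersecting with the appropriate term, $G_j=G_{j+1}$, contradicting strictness of the original chain.

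For the second assertion I would first record the transitivity fact that a retract of a retract is again a retract: if $H$ is a retract of $G$ and $K$ is a retract of $H$, then composing a retraction $G\to H$ with one $H\to K$ gives a retraction $G\to K$, since the first map fixes $H\supseteq K$ pointwise. From this, two consequences drive the $n_3$ computation. First, if $H$ is a proper retract of $G$, then prepending $G$ to a maximal retract series of $H$ yields a strictly longer retract series of $G$, so $n_2(H)<n_2(G)$; hence $n_1=n_2$ is a function that strictly decreases along proper retracts. Since $n_3$ is by construction the least such function, and finiteness of $n_1(G)$ makes the underlying induction well founded, I obtain $n_3(G)\le n_1(G)$.

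For the reverse inequality I would take a retract series $G=G_{n_1(G)}>\cdots>G_1>G_0=1$ of maximal length $n_1(G)=n_2(G)$. Restricting the retraction $G\to G_i$ to the larger term $G_{i+1}$ shows that each $G_i$ is a \emph{proper} retract of $G_{i+1}$; feeding this into the recursive definition of $n_3$ gives $n_3(G_{i+1})\ge 1+n_3(G_i)$, and starting from $n_3(G_0)=n_3(1)=0$ an induction yields $n_3(G)\ge n_1(G)$. Combining the two inequalities gives $n_3(G)=n_1(G)=n_2(G)$.

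The step I expect to be the main obstacle is the compatibility correction of the complements: verifying that each enlarged subgroup $N_i(N_{i+1}\cap G_i)$ is still a genuine complement (both that it generates $G$ together with $G_{i+1}$ and that it meets $G_{i+1}$ trivially) and that the strict inclusions all survive requires careful bookkeeping with the modular law and with normality of the $N_i$. Everything else is a fairly mechanical consequence of Lemmas~\ref{Semi1} and \ref{Semi2} together with the transitivity of retractions.
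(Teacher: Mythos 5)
Your proposal reproduces the paper's proof essentially step for step: the modular-law correction that nests the normal complements is exactly the paper's argument for $n_2(G)\le n_1(G)$, and your treatment of $n_3$ --- least function strictly decreasing along proper retracts for $n_3(G)\le n_1(G)$, then induction along a maximal-length series using that each term is a proper retract of the next for $n_3(G)\ge n_1(G)$ --- is the paper's argument as well (your use of restricted retractions and transitivity is, if anything, slightly cleaner than the paper's appeal to the series $NH_i$). The one caveat is that your nesting correction $N_i(N_{i+1}\cap G_i)$ needs the complements to be normal, which holds in the retract-to-splitting direction but not in the splitting-to-retract direction (there the terms $G_i$ are normal and their complements need not be, so the analogous product need not be a subgroup); since the paper itself dismisses that inequality as ``easy to see'' without proof, this is a shared imprecision rather than a divergence.
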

By the above proposition, when $n_1 (G)<\infty$, we have $n_1 (G)=n_2 (G)=n_3 (G)$. We denote this number by $sl(G)$, i.e., \textit{splitting length} of $G$. 
\begin{lemma}\label{lem1}
Let $G$ be a group with $sl (G)<\infty$. If $H$ is a retract of $G$, then $sl(H)\leq sl(G)$. Moreover, if $H$ is proper retract, then $sl(H)\lneqq sl (G)$.
\end{lemma}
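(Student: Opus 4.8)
The plan is to work with the characterization $sl=n_2$ from Proposition \ref{pro1}, namely that the splitting length equals the maximal length of a retract series, and to exploit the fact that retracts compose. First I would record the transitivity of the retract relation: if $K$ is a retract of $H$ via a retraction $s:H\to K$ and $H$ is a retract of $G$ via $r:G\to H$, then $s\circ r:G\to K$ is a retraction, since for every $x\in K\subseteq H$ we have $s(r(x))=s(x)=x$; hence $K$ is a retract of $G$. In particular every term of any retract series of $H$ is again a retract of $G$.

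Next, suppose $H$ is a proper retract of $G$ and let $H=H_0 > H_1 > \cdots > H_k=1$ be any retract series of $H$, of length $k$. Prepending $G$ yields the chain $G > H_0 > H_1 > \cdots > H_k=1$; the first inclusion is strict because $H\neq G$, and by the transitivity step each $H_i$ is a retract of $G$ (while $G$ is trivially a retract of itself). Thus this chain is a retract series of $G$ of length $k+1$. Taking the supremum over all retract series of $H$ gives $n_2(H)+1\leq n_2(G)=sl(G)<\infty$. In particular $n_2(H)$ is finite, so since $n_1(H)=n_2(H)$ the value $sl(H)=n_2(H)$ is defined by Proposition \ref{pro1}, and $sl(H)=n_2(H)\leq sl(G)-1\lneqq sl(G)$, which is the second assertion.

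Finally, for a general retract $H$ of $G$ there are two cases: either $H=G$, in which case $sl(H)=sl(G)$, or $H$ is a proper retract, in which case the previous paragraph gives $sl(H)\lneqq sl(G)$. In both cases $sl(H)\leq sl(G)$, which proves the first assertion.

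I expect the only point requiring genuine care to be the well-definedness of $sl(H)$: one must verify that $n_1(H)<\infty$ before invoking the equality $sl(H)=n_2(H)$, and this is exactly why the prepending argument is arranged so as to bound $n_2(H)$ by the finite quantity $sl(G)-1$. The transitivity of retracts and the strictness of the prepended inclusion are otherwise routine, so the proof is short once the right characterization ($sl=n_2$) is used.
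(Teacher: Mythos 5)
Your proof is correct, but it reaches the conclusion by a genuinely different route from the paper's. The paper argues entirely with splitting normal series: it fixes a maximal splitting normal series $H=H_0 > \cdots > H_{sl(H)}=1$ of $H$, takes a normal complement $N$ of $H$ in $G$, verifies that each complement $K_i$ of $H_i$ in $H$ is also a complement of $NH_i$ in $G$ (via $NH_iK_i=NH=G$ and $NH_i\cap K_i\subseteq H_i\cap K_i=1$), and so obtains the splitting normal series $G=NH_0 > \cdots > NH_{sl(H)}=N$ of $G$, appending the extra step $N>1$ at the bottom when $H$ is proper. You instead invoke the characterization $sl=n_2$ from Proposition \ref{pro1} together with the transitivity of retracts, and prepend $G$ at the top of an arbitrary retract series of $H$. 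Your version buys simplicity: it avoids the (routine but nonzero) verifications that $NH_i$ is normal in $G$ and is complemented by $K_i$, replacing them with the one-line composition of retractions; the cost is that it relies on the unconditional equality $n_1=n_2$, whereas the paper's construction stays inside the splitting-normal-series picture. One point where your write-up is actually more careful than the paper's: you note that $sl(H)$ must be shown to be defined (i.e.\ $n_2(H)<\infty$) before it can be compared with $sl(G)$, and your prepending argument delivers exactly the bound $n_2(H)\leq sl(G)-1$ needed for this, while the paper's proof begins by assuming a maximal splitting normal series of $H$ exists without comment.
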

\begin{proof}
Assume that $H=H_0 > H_1 > \cdots > H_{sl(H)}=1$ is a splitting normal series of $H$ with the maximum length. Since $H$ is a retract of $G$, it has a normal complement $N$ in $G$. If $K_i$ is a complement of $H_i$ in $H$, then  $K_i$ is a complement $NH_i$ in $G$ because $NH_iK_i=NH=G$ and $NH_i \cap K_i\subseteq H_i\cap K_i=1$. So $G=NH_0 >NH_1 >\cdots >NH_{sl(H)}=N$ is a  splitting normal series of $G$. Then $sl(G)\geq sl(H)$. Now let $H$ be a proper retract of $G$. Then $N$ is nontrivial,  and so $G=NH_0 >NH_1 >\cdots >NH_{sl(H)}=N>1$ is a splitting normal series of $G$ which implies that $sl(G)\geq sl(H)+1$. Thus, $sl(H)\lneqq sl(G)$.
\end{proof}

The following observation comes from the definition. 
\begin{lemma}\label{must}
Let $G$ and $H$ be two groups. Then
\begin{enumerate}
\item
$sl(G)=0$ if and only if $G=1$.
\item
If $G \cong H$, then $sl(G)=sl(H)$.
\end{enumerate}
\end{lemma}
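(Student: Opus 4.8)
The plan is to verify both statements directly from the definition of $sl$, using the characterizations in Definition \ref{def1} and Proposition \ref{pro1}; neither part requires more than tracking how splitting normal series behave, so the work is entirely a matter of inspection.

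For part (1) I would argue both directions. If $G=1$, the only series of $G$ is the trivial chain $G=1$, whose number of strict inclusions is $0$, so $n_1(G)=0$ and hence $sl(G)=0$ (this also agrees with $n_3(1)=0$ in Definition \ref{def1}). Conversely, assuming $G\neq 1$, I would exhibit the splitting normal series $G=G_0 > G_1=1$ of length $1$: the term $G_0=G$ admits the trivial subgroup as a complement, since $G\cdot 1=G$ and $G\cap 1=1$, while the term $G_1=1$ admits $G$ itself as a complement. This forces $sl(G)=n_1(G)\geq 1>0$, and combining the two directions gives the claimed equivalence.

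For part (2) I would show that an isomorphism transports splitting normal series to splitting normal series of the same length. Fixing an isomorphism $\varphi\colon G\to H$ and a splitting normal series $G=G_0 > G_1 > \cdots > G_k=1$ with $N_i$ a complement of $G_i$, I would note that $\varphi$ preserves strict inclusions, normality, products, and intersections of subgroups, so $H=\varphi(G_0) > \varphi(G_1) > \cdots > \varphi(G_k)=1$ is a series of $H$ in which $\varphi(N_i)$ complements $\varphi(G_i)$, because $\varphi(G_i)\varphi(N_i)=\varphi(G_iN_i)=H$ and $\varphi(G_i)\cap\varphi(N_i)=\varphi(G_i\cap N_i)=1$. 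Hence $\varphi$ carries a length-$k$ splitting normal series to another of length $k$, giving $n_1(H)\geq n_1(G)$; applying the same reasoning to $\varphi^{-1}$ yields the reverse inequality, so $n_1(G)=n_1(H)$ and therefore $sl(G)=sl(H)$.

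Since every step is either a direct inspection of the length-$0$ and length-$1$ cases or a routine transfer of the relevant structure along an isomorphism, there is no genuine obstacle. The only point that will require slight care is the edge case $G=1$ in part (1), where one must confirm that the empty list of strict inclusions is read as length $0$ rather than as an undefined quantity.
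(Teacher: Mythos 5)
Your proof is correct and matches the paper's treatment: the paper simply records this lemma as an observation that ``comes from the definition'' and gives no proof, and your argument supplies exactly the routine verification that is being left implicit (the length-one splitting normal series $G>1$ for nontrivial $G$, and transport of splitting normal series along an isomorphism).
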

\begin{proposition}\label{pro2}
Let $G$ be a group. 
\begin{enumerate}
\item
If $G$ is a free group of finite rank, then  $sl(G)=rank(G)$.
\item
If $G$ is a finitely generated abelian group, then $sl(G)$ is the number of nonzero direct summands  in the canonical form of $G$.
\item
If $G$ is a countable elementary amenable group with $cd(G)<\infty$, then $sl(G)=h(G)$, where $h(G)$ is the Hirsch length of $G$.
\end{enumerate}
\end{proposition}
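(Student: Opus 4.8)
The plan is to prove all three equalities by the same two-step scheme: exhibit an explicit series realizing the claimed value (lower bound), and bound the length of an arbitrary retract series from above by a suitable invariant (upper bound). The common engine is a \emph{restriction principle}: if $B\subseteq A\subseteq G$ and $B$ is a retract of $G$, then $B$ is a retract of $A$, because a retraction $r\colon G\to B$ restricts to a retraction $A\to B$ (indeed $r(A)\subseteq r(G)=B\subseteq A$ and $r|_B=\mathrm{id}_B$). Hence, in any retract series $G=G_0>G_1>\cdots>G_k=1$, each $G_{i+1}$ is a \emph{proper} retract of $G_i$. To bound the length $k$ of such a series, and hence $n_2(G)=sl(G)$, it then suffices, within each class, to attach to every group a nonnegative integer invariant $\nu$ that strictly decreases under passage to a proper retract; then $\nu(G_0)>\nu(G_1)>\cdots>\nu(G_k)=\nu(1)=0$ forces $k\le\nu(G)$. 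This simultaneously shows $sl(G)<\infty$, so that Proposition \ref{pro1} applies and $sl(G)$ is well defined.

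For part (1) I would take $\nu=\mathrm{rank}$. A retract $H$ of a free group $G$ is free (being a subgroup of a free group) and is a quotient of $G$ (from $G=H\ltimes N$), so $\mathrm{rank}(H)\le\mathrm{rank}(G)$; if equality held, the retraction $G\to H$ would be a surjective endomorphism of a free group of rank $\mathrm{rank}(G)$, hence an isomorphism by the Hopfian property, forcing $H=G$. Thus a proper retract has strictly smaller rank and $sl(F_n)\le n$, while the series $F_n>F_{n-1}>\cdots>F_1>1$ with $F_k=\langle x_1,\dots,x_k\rangle$ (retracted off by killing the remaining generators) gives $sl(F_n)\ge n$. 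For part (2) a retract of an abelian group is precisely a direct summand, and I would use $\nu=\mu$, the number of cyclic factors in the decomposition of $G$ into infinite cyclic and prime-power cyclic summands. Since the multiset of these factors of $B\oplus C$ is the disjoint union of those of $B$ and of $C$, $\mu$ is additive, so a proper summand $B$ of $A=B\oplus C$ (with $C\neq 0$) satisfies $\mu(B)=\mu(A)-\mu(C)<\mu(A)$; peeling the canonical summands off one at a time realizes the value $\mu(G)$, giving $sl(G)=\mu(G)$.

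For part (3) the upper bound follows the same pattern with $\nu=h$. Here $cd(G)<\infty$ forces $G$, and each of its subgroups, to be torsion-free (a nontrivial torsion element would generate a finite cyclic subgroup of infinite cohomological dimension, contradicting $cd(G)<\infty$ by monotonicity of $cd$ under subgroups). By additivity of the Hirsch length over extensions, a retract $H$ of $G$ with normal complement $N$ satisfies $h(H)=h(G)-h(N)$, and a nontrivial torsion-free $N$ has $h(N)\ge 1$; thus a proper retract strictly lowers $h$. Since every term of a retract series again lies in the class (being a subgroup of $G$), we conclude $sl(G)\le h(G)$.

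The reverse inequality is where I expect the real difficulty to lie. By induction on $h(G)$ through Lemma \ref{lem1} and the relation $sl(G)\ge sl(H)+1$ for a proper retract $H$, it reduces to producing, for each such $G$ with $h(G)\ge 1$, a proper retract $H$ in the class with $h(H)=h(G)-1$; equivalently, a normal subgroup of Hirsch length one admitting a complement. In the free and abelian cases these splittings are manifest, but here they are not automatic: for a nonabelian torsion-free nilpotent group the centre, which is the natural Hirsch-length-one normal subgroup, need not be complemented, so the naive induction can stall. Overcoming this is the main obstacle, and a correct argument would have to invoke the structure theory of countable elementary amenable groups of finite cohomological dimension to locate, at each stage, a complemented normal subgroup that drops the Hirsch length by exactly one. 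Granting such a splitting throughout, the induction closes and yields $sl(G)=h(G)$.
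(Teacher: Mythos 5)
Your treatment of parts (1) and (2) is correct and follows essentially the same route as the paper: the paper's proof of (1) is exactly your Hopfian/rank argument (each term of a retract series is a proper retract of its predecessor, and a proper retract of a free group of finite rank has strictly smaller rank), and its proof of (2) is a one-line appeal to the fundamental theorem of finitely generated abelian groups, for which your additivity-of-elementary-divisors argument supplies the intended content. You are in fact more careful than the paper on one point: in (1) and (2) the paper only proves the upper bounds (it shows $G_{\mathrm{rank}(G)}=1$, resp.\ $G_{\mu(G)}=1$, for an arbitrary retract series) and never exhibits the series realizing them, whereas you do. Your ``restriction principle'' is precisely the unstated fact the paper relies on when it asserts that each $G_i$ is a proper retract of $G_{i-1}$.

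For part (3) your upper bound $sl(G)\le h(G)$ again coincides with the paper's argument (the paper cites a lemma of Ko\l{}odziejczyk for ``$h(N)=0$ and $N\le G$ imply $N=1$'', which is your torsion-freeness observation combined with local finiteness of groups of Hirsch length zero). The gap you flag in the reverse inequality is real, and it is not one you could have closed: the paper's own proof contains nothing beyond the upper bound --- after deducing $G_{h(G)}=1$ it simply writes ``Thus $sl(G)=h(G)$'' --- and the asserted equality is in fact false. Your suspicion about nonabelian torsion-free nilpotent groups is exactly on target. Take $G=\langle x,y,z\mid z=[x,y],\,[x,z]=[y,z]=1\rangle$, the integral Heisenberg group: it is countable, elementary amenable, with $cd(G)=h(G)=3$. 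Every infinite cyclic normal subgroup of $G$ is central (the centralizer of a non-central element has infinite index, while the kernel of the conjugation map to $\mathrm{Aut}(\mathbb{Z})$ has index at most $2$), so a retract of Hirsch length $2$ would have normal complement contained in $Z(G)=[G,G]$, forcing it to be an abelian complement to the centre and hence $G$ to be abelian. Thus every proper nontrivial retract of $G$ is infinite cyclic; moreover no proper subgroup $\langle w^k\rangle$ ($k\ge 2$) of such a retract $\langle w\rangle$ is again a retract of $G$, since its normal complement $N$ would give $G/N\cong\mathbb{Z}$ generated by the image of $w^k$, a proper power of the image of $w$. Hence $sl(G)=2<3=h(G)$. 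The correct statement of (3) is therefore the inequality $sl(G)\le h(G)$, which is all that Theorem \ref{MainTheorem} and Corollaries \ref{amen} and \ref{amen2} actually use; your write-up, which proves exactly this inequality and isolates the unprovable half, is in this respect more accurate than the paper's.
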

\begin{proof}
\begin{enumerate}
\item
Assume that $G=G_0 > G_1 > \cdots >G_{i-1}>G_{i}>\cdots $ is an arbitrary retract series of $G$. Since each $G_i$  is a  retract of $G$, each $G_i$  is a proper retract of $G_{i-1}$. Then there is an epimorphism $r:G_{i-1}\to G_i$ such that $r\circ i={\rm id}_{G_i}$, where $i:G_i\hookrightarrow G_{i-1}$ is the inclusion homomorphism. We claim that $rank(G_i)\lneqq rank(G_{i-1})$. Suppose, on the contrary, that $rank(G_{i-1}) =rank(G_i)$. This implies that $G_{i-1}\cong G_{i}$. Now by the fact that an epimorphism between isomorphic Hopfian groups is an isomorphism, we get that $r:G_{i-1}\to G_i$ is an isomorphism. Then $i:G_i\hookrightarrow G_{i-1}$ is an isomorphism which is a contradiction to the fact that $G_i$ is a proper retract of $G_{i-1}$. Hence $rank(G_i)\lneqq rank(G_{i-1})$ as claimed. This implies that $G_{rank(G)}=1$. Thus, $sl(G)=rank(G)$.
\item
It is easily concluded from the fundamental theorem of finitely generated abelian groups. 
\item
Let $G$ be a  countable elementary amenable group with $cd(G)<\infty$. Since the class of elementary amenable groups is closed under taking subgroups, each retract of $G$ is also countable elementary amenable. Since $cd(G)<\infty$, then $h(G)<\infty$ (see  \cite[Theorem 5]{Hil}). It is known that $H\subseteq G$ implies that $cd(H)\leq cd(G)$ (see \cite{Br}). Thus, we have $cd(H)<\infty$, hence $h(H)<\infty$, also for each retract $H$ of $G$.

Assume that $G=G_0 > G_1 > \cdots >G_{i-1}>G_{i}>\cdots $ is an arbitrary  retract series of $G$. Since each $G_i$ is a retract of $G$, each $G_i$ is a proper retract of $G_{i-1}$. Then there is an epimorphism $r:G_{i-1}\to G_i$ such that $r\circ i={\rm id}_{G_i}$, where $i:G_i\hookrightarrow G_{i-1}$ is the inclusion homomorphism. We claim that $h(G_i)\lneqq h(G_{i-1})$. Suppose, on the contrary, that $h(G_{i-1}) =h(G_i)$. Let $N_i$ be the kernel of the retraction $r$, i.e., $N_i\lhd G_{i-1}$ satisfies $G_{i-1}=G_i N_i$ and $G_i \cap N_i=1$. Then we have $h(G_{i-1})=h(N_i)+h(G_i)$ which implies that $h(N_i)=0$. Hence $N_i=1$ (see \cite[Lemma 3]{D2018}), and so $G_{i-1}=G_i$ which is a contradiction to the fact that $G_i$ is a proper retract of $G_{i-1}$. Hence $h(G_i)\lneqq h(G_{i-1})$ as claimed. This implies that $G_{h(G)}=1$. Thus, $sl(G)=h(G)$.
\end{enumerate}
\end{proof}
\begin{remark}
In \cite[Ch. I, Theorem 5]{Hil} the assumption that $G$ is countable can be omitted (see remarks in \cite[Ch. I]{Hil}). Hence, it could also be omitted in Proposition \ref{pro2}. Anyway, in our applications we consider only finitely presented groups, which are always countable.
\end{remark}
\section{Main results}
For a polyhedron $P$, let $X\leqslant P$ denote that $X$ is homotopy dominated by $P$. Writing this we will have in mind a fixed domination $d_X : P \to X$ of $P$ over $X$, and a fixed inverse map $u_X : X \to P$ (i.e. $d_X u_X \simeq  \textrm{id}_X$). It is easily   seen that the map $k_X = u_X d_X : P \to P$ is an idempotent in the homotopy category of CW-complexes and homotopy classes of maps between them.  From now on ``dominated'' will always mean ``homotopy dominated''.
 
 In what follows, $\tilde{X}$ will denote, as usual notation, the universal covering space of $X$.  Let $f:X\to Y$ be a cellular map of CW-complexes such that $f(x)=y$, for vertices $x\in X$, $y\in Y$. Choose $\tilde{x}\in p^{-1}(x)$, $\tilde{y}\in q^{-1}(y)$, where $p:\tilde{X}\to X$ and $q:\tilde{Y}\to Y$ denote the covering maps. Then it is well-known that there exists a unique map $\tilde{f}:\tilde{X}\to \tilde{Y}$ such that $q\tilde{f}=fp$ and $f(\tilde{x})=\tilde{y}$.
 
 Note that if $X \leqslant P$, where $P$ is a polyhedron, then $\tilde{X}\leqslant \tilde{P}$. As a result, ${\rm im}\big( \pi_1 (u_X )\big)$, ${\rm im}\big( H_i (u_X )\big)$ and ${\rm im}\big( H_i (\tilde{u}_X )\big)$ are retracts of $\pi_1 (P)$, $H_i (P)$ and $H_i (\tilde{P})$, respectively, for all $i$, by Lemma \ref{Semi1}.  
 
 We now prove our first main result.
\begin{theorem}\label{MainTheorem}
Let $P$ be a polyhedron of dimension $n$. If $sl(\pi_1 (P))<\infty$ and $sl( H_i (\tilde{P}))<\infty$ for $i\geq 2$, then $D(P)\leq  sl\big( \pi_1 (P)\big)  +\sum_{i=2}^{n} sl\big(  H_i (\tilde{P})\big)$.
\end{theorem}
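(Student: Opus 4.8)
The plan is to attach to every CW-complex $X$ occurring in a chain for $P$ a single non-negative integer
$$\Phi(X)=sl\big(\pi_1(X)\big)+\sum_{i=2}^{n} sl\big(H_i(\tilde{X})\big),$$
and then to show three things: that $\Phi$ is finite on the whole chain, that $\Phi(P)$ equals the claimed bound, and that $\Phi$ drops by at least $1$ at every strict step. Given a chain $X_k<\cdots<X_1<X_0=P$, each $X_j$ satisfies $X_j\leqslant P$, so $d_{X_j}u_{X_j}\simeq\mathrm{id}$ makes $\pi_1(u_{X_j})$ and $H_i(\tilde{u}_{X_j})$ split injections; hence $\pi_1(X_j)$ and $H_i(\tilde{X}_j)$ are isomorphic to the retracts $\mathrm{im}(\pi_1(u_{X_j}))$ of $\pi_1(P)$ and $\mathrm{im}(H_i(\tilde{u}_{X_j}))$ of $H_i(\tilde{P})$ noted in the remark before the theorem (via Lemma~\ref{Semi1}). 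Together with the hypotheses $sl(\pi_1(P))<\infty$, $sl(H_i(\tilde{P}))<\infty$ and Lemma~\ref{lem1}, this gives $\Phi(X_j)<\infty$ for all $j$; and since $u_{X_0}=\mathrm{id}$ we have $\Phi(X_0)=sl(\pi_1(P))+\sum_{i=2}^{n}sl(H_i(\tilde{P}))$, the right-hand side of the theorem.

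The next step is monotonicity. Fix consecutive terms $X_{j+1}\leqslant X_j$ and let $b\colon X_{j+1}\to X_j$ be a section of the associated domination, so $\pi_1(b)$ is a split injection and $\pi_1(X_{j+1})\cong\mathrm{im}(\pi_1(b))$ is a retract of $\pi_1(X_j)$. As in the remark preceding the theorem, the domination lifts to $\tilde{X}_{j+1}\leqslant\tilde{X}_j$, so each $H_i(\tilde{X}_{j+1})$ is likewise isomorphic to a retract of $H_i(\tilde{X}_j)$. Lemma~\ref{lem1} then yields $sl(\pi_1(X_{j+1}))\leq sl(\pi_1(X_j))$ and $sl(H_i(\tilde{X}_{j+1}))\leq sl(H_i(\tilde{X}_j))$ for each $i$, and therefore $\Phi(X_{j+1})\leq\Phi(X_j)$.

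The heart of the argument, and the step I expect to be the main obstacle, is strictness: I must rule out $\Phi(X_{j+1})=\Phi(X_j)$ at a \emph{strict} step $X_{j+1}<X_j$. If equality held, then since $\Phi$ decreases termwise each summand would be unchanged, and by the ``moreover'' clause of Lemma~\ref{lem1} a proper retract strictly lowers $sl$; so none of the retracts above can be proper. Thus $\pi_1(b)$ is onto, hence an isomorphism (being already injective), and every $H_i(\tilde{b})$ is an isomorphism for $2\le i\le n$. The remaining degrees are automatic: $H_0(\tilde{b})$ is an isomorphism because the covers are connected, $H_1(\tilde{b})$ because they are simply connected, and $H_i(\tilde{b})$ for $i>n$ because $\tilde{X}_{j+1},\tilde{X}_j$ are dominated by the $n$-dimensional complex $\tilde{P}$ and so have trivial homology above degree $n$. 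Hence $\tilde{b}\colon\tilde{X}_{j+1}\to\tilde{X}_j$ is an integral-homology isomorphism between simply connected CW-complexes, so by the homology Whitehead theorem it is a homotopy equivalence; combined with $\pi_1(b)$ being an isomorphism, this forces $b$ to be a weak homotopy equivalence, and therefore a homotopy equivalence of CW-complexes. But then $X_j\leqslant X_{j+1}$, contradicting $X_{j+1}<X_j$. Consequently $\Phi(X_{j+1})\leq\Phi(X_j)-1$ at every strict step.

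Assembling these, from $0\le\Phi(X_k)\le\Phi(X_0)-k$ I would conclude $k\le\Phi(X_0)=sl(\pi_1(P))+\sum_{i=2}^{n}sl(H_i(\tilde{P}))$; since the chain was arbitrary, this bounds the length of every chain for $P$ and gives $D(P)\le sl(\pi_1(P))+\sum_{i=2}^{n}sl(H_i(\tilde{P}))$. The delicate point is precisely the passage from ``all induced maps are isomorphisms'' to ``$b$ is a homotopy equivalence'', which needs $\tilde{b}$ to be a homology isomorphism in \emph{every} degree; the dimension bound on $\tilde{P}$ and the simple-connectivity of the universal covers are exactly what handle the degrees outside the range $2\le i\le n$ that $\Phi$ records.
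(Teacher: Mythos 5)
Your proposal is correct and follows essentially the same route as the paper: at each strict step of a chain the induced maps on $\pi_1$ and on $H_i$ of universal covers give retracts, at least one of which must be proper (else the Whitehead theorem for the universal covers forces a homotopy equivalence, contradicting strictness), so the quantity $sl(\pi_1)+\sum_{i=2}^{n} sl(H_i(\tilde{\;\cdot\;}))$ strictly decreases. Your packaging via the potential function $\Phi$ and your explicit handling of the homology degrees $0$, $1$, and $i>n$ are only presentational refinements of the paper's argument.
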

\begin{proof}
Consider the following chain of CW-complexes:
$$
\cdots < X_{j+1}<X_j <\cdots <X_3 <X_2 <X_1 < X_0 =P.
$$
 Let $d_{X_{j+1}}:X_j \to X_{j+1}$ and $u_{X_{j+1}} :X_{j+1}\to X_j$ be the domination of $X_j$ over $X_{j+1}$ and the converse map, i.e., $d_{X_{j+1}} u_{X_{j+1}}\simeq id_{X_{j+1}}$. Then $\pi_1 (d_{X_{j+1}}) \pi_1 (u_{X_{j+1}})= id_{\pi_1 (X_{i+1})}$ and  $H_i (\tilde{d}_{\tilde{X}_{j+1}}) H_i (\tilde{u}_{\tilde{X}_{j+1}})= id_{H_i (\tilde{X}_{j+1})}$, for $2\leq i\leq n$ (note that since $X_j \leqslant P$ , we have $\tilde{X}_j \leqslant \tilde{P}$. But $\dim (\tilde{P})=\dim (P)$, and so $H_i (\tilde{P})=0$ for $i>n$. Accordingly, $H_i (\tilde{X}_j)=0$ for $i>n$). As a result, ${\rm im}\big(\pi_1 (u_{X_{j+1}})\big)$ and ${\rm im}\big(H_i (\tilde{u}_{\tilde{X}_{j+1}})\big)$ are retracts of $\pi_1 (X_j)$ and $H_i (\tilde{X}_j)$, $2\leq i\leq n$, respectively.
 
Assume that ${\rm im}\big(\pi_1 (u_{X_{j+1}})\big)=\pi_1 (X_j)$ and  ${\rm im}\big( H_i (\tilde{u}_{\tilde{X}_{j+1}})\big)=H_i (\tilde{X}_j)$, for $2\leq i\leq n$. Then $\pi_1 (u_{X_{j+1}})$ and $H_i (\tilde{u}_{\tilde{X}_{j+1}})$ are isomorphisms,  for $2\leq i\leq n$. Accordingly, $\pi_1 (d_{X_{j+1}})$ and $H_i (\tilde{d}_{\tilde{X}_{j+1}})$ are isomorphisms,  for $2\leq i\leq n$. This implies that $d_{X_{j+1}}:X_j \to X_{j+1}$ is a homotopy equivalence by the Whitehead Theorem (if $X$ and $Y$ are CW-complexes and there exists a map $f : X \to Y$ such that $f$ induces an isomorphism $\pi_1 (f) : \pi_1 (X) \to \pi_1(Y )$ and isomorphisms $H_i (\tilde{f}):H_i (\tilde{X})\to H_i (\tilde{Y})$ for all $i\in \mathbb{N}$, then $f$ is a homotopy equivalence) which is a  contradiction to  $X_{j+1} <X_j$. Therefore, either ${\rm im}\big(\pi_1 (u_{X_{j+1}})\big)$ is a proper retract of $\pi_1 (X_j)$ or  ${\rm im}\big( H_{i_0} (\tilde{u}_{\tilde{X}_{j+1}})\big)$ is a proper retract of $H_{i_0} (\tilde{X}_j)$, for some $2\leq i_0 \leq n$. So by Lemma \ref{lem1}, either $sl \Big({\rm im}\big(\pi_1 (u_{X_{j+1}})\big) \Big)<sl(\pi_1 (X_j))$ or $sl\Big( {\rm im}\big( H_{i_0} (\tilde{u}_{\tilde{X}_{j+1}})\big) \Big) <sl(H_{i_0} (\tilde{X}_j))$ for some $2\leq i_0 \leq n$. Since $\pi_1 (X_{j+1})\cong {\rm im}\big( \pi_1 (u_{X_{j+1}})\big)$ and $H_{i_0} (\tilde{X}_{j+1})\cong {\rm im}\big( H_{i_0} (\tilde{u}_{\tilde{X}_{j+1}})\big)$, we get by Lemma \ref{must} that either $sl(\pi_1 (X_{j+1}))<sl(\pi_1 (X_j))$ or $sl(H_{i_0} (\tilde{X}_{j+1}))<sl(H_{i_0} (\tilde{X}_j))$, for some $2\leq i_0 \leq n$. Note that by Lemma \ref{lem1}, in general we have  $sl(\pi_1 (X_{j}))\leq sl(\pi_1(P))<\infty$ and $sl(H_{i} (\tilde{X}_{j}))\leq sl(H_i(\tilde{P}))<\infty$ for all $i$. Then for $j_0:=sl(\pi_1 (P)) +\sum_{i=2}^{n} sl( H_i (\tilde{P}))$  we have $sl(\pi_1 (X_{j_0}))=0$ and $sl(H_{i} (\tilde{X}_{j_0}))=0$ for all $i$. Hence by Lemma \ref{must}, $\pi_1 (X_{j_0})=1$ and $H_{i} (\tilde{X}_{j_0})=0$ for all $i$. Thus $X_{j_0}$ is homotopically trivial, and so  the proof is finished. 
\end{proof}
\begin{lemma}\label{lem2}
Let $G$ be a finite group and $M$ be a finitely generated $\mathbb{Z}G$-module. Then $M$ is a finitely generated abelian group.
\end{lemma}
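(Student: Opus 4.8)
The plan is to exploit the single structural feature that distinguishes finite $G$ from the general case: when $G$ is finite, the group ring $\mathbb{Z}G$ is itself finitely generated as a $\mathbb{Z}$-module. Indeed, $\mathbb{Z}G$ is free as an abelian group with the finite basis $\{g : g\in G\}$, so it has rank $|G|<\infty$ over $\mathbb{Z}$. The whole argument is then a matter of carefully separating the two notions of finite generation in play — over $\mathbb{Z}G$ on the one hand and over $\mathbb{Z}$ on the other.

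Concretely, first I would fix a finite generating set $m_1,\dots,m_k$ of $M$ as a $\mathbb{Z}G$-module, so that every $m\in M$ can be written as $m=\sum_{j=1}^{k} r_j m_j$ with coefficients $r_j\in\mathbb{Z}G$. Next I would expand each coefficient $r_j$ in the $\mathbb{Z}$-basis of $\mathbb{Z}G$, writing $r_j=\sum_{g\in G} a_{j,g}\,g$ with $a_{j,g}\in\mathbb{Z}$. Substituting and using the module axioms gives $m=\sum_{j=1}^{k}\sum_{g\in G} a_{j,g}\,(g\cdot m_j)$, which exhibits an arbitrary $m$ as a $\mathbb{Z}$-linear combination of the family $\{g\cdot m_j : g\in G,\ 1\le j\le k\}$. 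Since $G$ is finite and $k$ is finite, this family is finite; hence it generates $M$ as an abelian group, which is exactly the assertion that $M$ is finitely generated as a $\mathbb{Z}$-module.

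There is essentially no hard step: once the observation that $\mathbb{Z}G$ is a finite-rank free $\mathbb{Z}$-module is in hand, the rest is a routine substitution. The only point requiring care is to keep the adjectives ``finitely generated over $\mathbb{Z}G$'' and ``finitely generated over $\mathbb{Z}$'' rigorously apart, and to notice that the finiteness of $G$ is precisely what makes the passage between them work. For an infinite group the statement genuinely fails — for instance, with $G=\mathbb{Z}$ the module $\mathbb{Z}G$ is cyclic over itself yet is not finitely generated as an abelian group — which confirms that the hypothesis $|G|<\infty$ cannot be dropped.
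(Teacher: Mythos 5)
Your proof is correct and is essentially the same argument as the paper's: both expand $\mathbb{Z}G$-coefficients over the finite $\mathbb{Z}$-basis $\{g : g \in G\}$ to show that $\{g\cdot m_j : g\in G,\ 1\le j\le k\}$ generates $M$ as an abelian group. Your write-up just spells out the substitution step and the counterexample for infinite $G$ more explicitly.
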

\begin{proof}
Note that if $\{ m_1 ,\ldots ,m_k \}\subset M$ is a finite generating set for $M$ as a $\mathbb{Z}G$-module, then by enumerating $G=\{g_1,...,g_L\}$,  $\{g_l \cdot m_k \mid l=1,\ldots,L, \, k=1,\ldots,K\}$ is a finite generating set for $M$ as an abelian group.
\end{proof}
Ko\l{}odziejczyk in \cite{4} showed that polyhedra with finite fundamental group have finite capacity, and hence finite depth. Next corollary presents an upper bound for the depth of such polyhedra.
\begin{corollary}\label{finite}
Let $P$ be a polyhedron of dimension $n$  with finite fundamental group. Then $D(P)\leq  sl\big( \pi_1 (P)\big) +\sum_{i=2}^{n}n_i$, where $n_i$ is the number of nonzero direct summands  in the canonical form of $H_i (\tilde{P})$.
\end{corollary}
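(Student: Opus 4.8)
The plan is to obtain this corollary as a direct specialization of Theorem \ref{MainTheorem}. For that theorem to apply I must verify its two hypotheses for a polyhedron $P$ with finite fundamental group, namely that $sl(\pi_1(P))<\infty$ and that $sl(H_i(\tilde P))<\infty$ for $2\le i\le n$; once these hold, I only need to identify each $sl(H_i(\tilde P))$ with the stated number $n_i$ and substitute into the inequality of the theorem.

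The first hypothesis is immediate: since $\pi_1(P)$ is finite it has only finitely many subgroups, and in any retract series $\pi_1(P)=G_0>G_1>\cdots>G_k=1$ each inclusion is strict, so $k\le |\pi_1(P)|$ and hence $sl(\pi_1(P))<\infty$. For the homology of the universal cover I would argue that $H_i(\tilde P;\mathbb{Z})$ is a finitely generated abelian group. The cleanest route uses the module structure: the cellular chain complex $C_*(\tilde P)$ consists of finitely generated free $\mathbb{Z}[\pi_1(P)]$-modules, with one free generator for each cell of $P$, so each homology group $H_i(\tilde P)$ is a finitely generated $\mathbb{Z}[\pi_1(P)]$-module; because $\pi_1(P)$ is finite, Lemma \ref{lem2} then forces $H_i(\tilde P)$ to be finitely generated as an abelian group. (Equivalently, since $\pi_1(P)$ is finite the covering $\tilde P\to P$ has $|\pi_1(P)|$ sheets, so $\tilde P$ is itself a finite CW-complex and its integral homology is automatically finitely generated.)

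With $H_i(\tilde P)$ finitely generated abelian, Proposition \ref{pro2}(2) applies and gives $sl(H_i(\tilde P))=n_i$, the number of nonzero direct summands in the canonical decomposition of $H_i(\tilde P)$; in particular each $sl(H_i(\tilde P))$ is finite, so both hypotheses of Theorem \ref{MainTheorem} are satisfied. Substituting $sl(H_i(\tilde P))=n_i$ into the bound $D(P)\le sl(\pi_1(P))+\sum_{i=2}^{n} sl(H_i(\tilde P))$ yields exactly $D(P)\le sl(\pi_1(P))+\sum_{i=2}^{n} n_i$, as claimed. There is no genuine obstacle here: the only real content is the finite generation of $H_i(\tilde P)$ as an abelian group, and the decisive step is the passage from a finitely generated $\mathbb{Z}[\pi_1(P)]$-module to a finitely generated abelian group supplied by Lemma \ref{lem2} (which is precisely why that lemma was recorded just before the corollary).
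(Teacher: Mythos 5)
Your proposal is correct and follows essentially the same route as the paper: both derive finite generation of $H_i(\tilde P)$ as an abelian group by viewing the cellular chain complex of $\tilde P$ as consisting of finitely generated $\mathbb{Z}\pi_1(P)$-modules, passing through Lemma \ref{lem2}, and then applying Proposition \ref{pro2} and Theorem \ref{MainTheorem}. Your explicit check that $sl(\pi_1(P))<\infty$ for finite $\pi_1(P)$ and your parenthetical remark that $\tilde P$ is itself a finite complex are welcome additions the paper leaves implicit, but they do not change the argument.
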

\begin{proof}
Recall that for every finite CW-complex $L$, the $k$-chains in the cellular complex of the universal cover $\tilde{L}$, $C_k (\tilde{L})$, have a structure of a finitely generated module over $\mathbb{Z}\pi_1 (L)$ with the basis corresponding to the $k$-cells of $L$ (see \cite{Wh} or \cite[Chapter 2, p. 28]{Co}). Since the $k$-cycles, $Z_k (\tilde{L})$, is a submodule of $C_k (\tilde{L})$, then $Z_k (\tilde{L})$ is a finitely generated $\mathbb{Z}\pi_1 (L)$-module. Thus $H_k (\tilde{L})$, a quotient module of $Z_k (\tilde{L})$, is also a finitely generated $\mathbb{Z}\pi_1 (L)$-module.

Therefore the $H_i (\tilde{P})$, for $i=1,2,\ldots $, are all finitely generated as  $\mathbb{Z}\pi_1 (P)$-module. So by the hypothesis and Lemma \ref{lem2}, $H_i (\tilde{P})$'s are all finitely generated abelian group for $i\geq 2$. Thus the proof is complete by Proposition \ref{pro2} and Theorem \ref{MainTheorem}.
\end{proof}
The following corollary is an immediate consequence of the above result.
\begin{corollary}\label{simply}
If  $P$ is a simply connected polyhedron of dimension $n$, then $D(P)\leq  \sum_{i=2}^{n}n_i$, where $n_i$ is the number of nonzero direct summands  in the canonical form of $H_i (P)$.
\end{corollary}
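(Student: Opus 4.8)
The plan is to derive this directly from Corollary~\ref{finite} by specializing to the case $\pi_1(P)=1$. First I would observe that a simply connected polyhedron has trivial fundamental group, and since the trivial group is finite, the hypotheses of Corollary~\ref{finite} are satisfied; hence the bound $D(P)\leq sl\big(\pi_1(P)\big)+\sum_{i=2}^{n}n_i$ applies, where in that statement $n_i$ counts the nonzero direct summands in the canonical form of $H_i(\tilde{P})$.

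Next I would simplify each of the two terms on the right-hand side. For the first term, by Lemma~\ref{must}(1) we have $sl(\pi_1(P))=sl(1)=0$, so it contributes nothing to the bound. For the second term, I would use the standard fact that the universal covering space of a simply connected space is the space itself, namely $\tilde{P}=P$ (the identity map is a covering map, and $P$, being simply connected, is its own universal cover). Consequently $H_i(\tilde{P})=H_i(P)$ for every $i$, so the integers $n_i$ appearing in Corollary~\ref{finite} coincide with the number of nonzero direct summands in the canonical form of $H_i(P)$, exactly as in the present statement.

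Substituting these two simplifications into the inequality from Corollary~\ref{finite} then gives $D(P)\leq 0+\sum_{i=2}^{n}n_i=\sum_{i=2}^{n}n_i$, which completes the argument. There is essentially no obstacle here, as the result is an immediate corollary; the only point requiring any care is the identification $\tilde{P}=P$, which is routine for simply connected complexes.
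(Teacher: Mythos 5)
Your proof is correct and follows exactly the route the paper intends: the paper states this corollary as an immediate consequence of Corollary~\ref{finite}, and you have simply made explicit the two routine specializations ($sl(1)=0$ and $\tilde{P}=P$ so $H_i(\tilde{P})=H_i(P)$) that the paper leaves to the reader.
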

In the next three  corollaries, we provide upper bounds for the depth of some classes of finite polyhedra with abelian, free or elementary amenable fundamental group. Their proofs are deduced from Proposition \ref{pro2} and Theorem \ref{MainTheorem}.

Recall that if $P$ is a connected and finite polyhedron, then $\pi_1 (P)$ is a finitely presented group (see \cite[Corollary 7.37]{R}).
\begin{corollary}\label{abelian}
Let $P$ be  a polyhedron of dimension $n$ with abelian $\pi_1 (P)$  and  finitely generated  $H_i (\tilde{P})$, for $i\geq 2$. Then  $D(P)\leq \sum_{i=1}^{n}n_i$, where $n_1$ and $n_i\; (i\geq 2)$ are the number of nonzero direct summands  in the canonical form of $\pi_1 (P)$ and $H_i (\tilde{P})$, respectively. 
\end{corollary}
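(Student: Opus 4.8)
The plan is to read this corollary as a direct specialization of Theorem \ref{MainTheorem}, in which the two families of splitting lengths appearing in that bound are computed explicitly via Proposition \ref{pro2}(2). The only substantive work is to verify that the finiteness hypotheses of Theorem \ref{MainTheorem} are satisfied and to identify the relevant splitting lengths with the counts $n_i$; there is no delicate geometry or new construction to perform.

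First I would treat the fundamental group. Since $P$ is a connected finite polyhedron, $\pi_1(P)$ is finitely presented, and in particular finitely generated, by \cite[Corollary 7.37]{R} as recalled just before the statement. Together with the standing assumption that $\pi_1(P)$ is abelian, this places $\pi_1(P)$ in the scope of Proposition \ref{pro2}(2), so $sl\big(\pi_1(P)\big)$ equals the number of nonzero direct summands in the canonical form of $\pi_1(P)$, namely $n_1$; in particular $sl\big(\pi_1(P)\big)=n_1<\infty$. Next I would handle the higher homology of the universal cover: for each $i\geq 2$ the hypothesis states that $H_i(\tilde{P})$ is finitely generated, and since $H_i(\tilde{P};\mathbb{Z})$ is an abelian group this means it is a finitely generated abelian group, so Proposition \ref{pro2}(2) applies again to give $sl\big(H_i(\tilde{P})\big)=n_i<\infty$. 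Thus both finiteness conditions required by Theorem \ref{MainTheorem} hold.

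Finally, invoking Theorem \ref{MainTheorem} gives $D(P)\leq sl\big(\pi_1(P)\big)+\sum_{i=2}^{n} sl\big(H_i(\tilde{P})\big)=n_1+\sum_{i=2}^{n} n_i=\sum_{i=1}^{n} n_i$, which is precisely the asserted bound. I do not expect a genuine obstacle here, as every step is an instance of results already established. The one point deserving care is the reading of the hypothesis ``finitely generated $H_i(\tilde{P})$'' as finite generation \emph{as an abelian group}, so that Proposition \ref{pro2}(2) is directly applicable; this is stronger than the finite generation as a $\mathbb{Z}\pi_1(P)$-module that, as noted in the proof of Corollary \ref{finite}, always holds for a finite complex, and it is the extra assumption that makes the abelian-group decomposition (and hence the count $n_i$) meaningful.
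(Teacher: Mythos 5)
Your proposal is correct and matches the paper's intended argument exactly: the paper derives Corollary \ref{abelian} precisely by combining Proposition \ref{pro2}(2) (using that $\pi_1(P)$ is finitely generated because $P$ is a finite connected polyhedron, and that each $H_i(\tilde{P})$ is finitely generated abelian by hypothesis) with Theorem \ref{MainTheorem}. Your added remark distinguishing finite generation as an abelian group from finite generation as a $\mathbb{Z}\pi_1(P)$-module is a correct and worthwhile clarification, but it does not change the route.
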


\begin{corollary}\label{free}
Let $P$ be a polyhedron of dimension $n$ with free $\pi_1 (P)$  and  finitely generated  $H_i (\tilde{P})$, for $i\geq 2$. Then  $D(P)\leq rank(\pi_1 (P)) +\sum_{i=2}^{n}n_i$, where $n_i$ is the number of nonzero direct summands  in the canonical form of $H_i (\tilde{P})$.
\end{corollary}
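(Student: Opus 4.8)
The plan is to recognize this statement as a direct specialization of Theorem~\ref{MainTheorem}, where the two splitting-length invariants appearing in that bound are computed by means of Proposition~\ref{pro2}. The only substantive task is to check that both $sl(\pi_1(P))$ and $sl(H_i(\tilde{P}))$ are finite (so that Theorem~\ref{MainTheorem} is applicable), and then to read off their values in the free/abelian setting. Accordingly I would split the argument into the fundamental-group contribution and the contribution of the higher homology of the universal cover.

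First I would handle the fundamental group. Since $P$ is finite and connected, $\pi_1(P)$ is finitely presented (by \cite[Corollary 7.37]{R}), and in particular finitely generated; being free as well, it is free of finite rank. Proposition~\ref{pro2}(1) then gives $sl(\pi_1(P)) = rank(\pi_1(P)) < \infty$. Next I would treat $H_i(\tilde{P})$ for $i \geq 2$. By hypothesis each such group is a finitely generated abelian group, so Proposition~\ref{pro2}(2) yields $sl(H_i(\tilde{P})) = n_i < \infty$, where $n_i$ is the number of nonzero direct summands in its canonical form. Since $\dim \tilde{P} = \dim P = n$, we have $H_i(\tilde{P}) = 0$ for $i > n$, so only finitely many terms contribute to the sum.

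Having established that all the relevant $sl$-values are finite, the hypotheses of Theorem~\ref{MainTheorem} are met, and that theorem delivers
$$
D(P) \leq sl(\pi_1(P)) + \sum_{i=2}^{n} sl(H_i(\tilde{P})) = rank(\pi_1(P)) + \sum_{i=2}^{n} n_i,
$$
which is exactly the claimed bound.

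I do not expect a genuine obstacle here, since all the weight of the result is carried by Theorem~\ref{MainTheorem} and Proposition~\ref{pro2}. The one point deserving care is interpretive: the hypothesis ``finitely generated $H_i(\tilde{P})$'' must be read as finitely generated \emph{as an abelian group}, not merely as a $\mathbb{Z}\pi_1(P)$-module. For an infinite free group $\pi_1(P)$ the group ring is not finite over $\mathbb{Z}$, so (unlike in the finite-$\pi_1$ case of Lemma~\ref{lem2} and Corollary~\ref{finite}) module finiteness would be the weaker condition. Under the abelian-group reading, the canonical form and hence each $n_i$ are well defined, and Proposition~\ref{pro2}(2) applies verbatim.
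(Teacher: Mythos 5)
Your proposal is correct and follows exactly the paper's own route: the paper derives Corollary~\ref{free} directly from Theorem~\ref{MainTheorem} together with Proposition~\ref{pro2}, using the finite presentability of $\pi_1(P)$ to ensure the free group has finite rank. Your additional remark that ``finitely generated'' must be read as finitely generated \emph{as an abelian group} (since $\mathbb{Z}\pi_1(P)$ is not finite over $\mathbb{Z}$ here) is a sensible clarification consistent with the paper's usage.
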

\begin{corollary}\label{amen}
Let $P$ be  a polyhedron of dimension $n$ with elementary amenable  $\pi_1 (P)$ of finite cohomological dimension  and  finitely generated  $H_i (\tilde{P})$, for $i\geq 2$. Then  $D(P)\leq h(\pi_1 (P)) +\sum_{i=2}^{n}n_i$, where $h(\pi_1 (P))$ is the Hirsch length of $\pi_1 (P)$ and  $n_i$ is the number of nonzero direct summands  in the canonical form of $H_i (\tilde{P})$.
\end{corollary}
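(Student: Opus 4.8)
The plan is to verify the two finiteness hypotheses of Theorem \ref{MainTheorem} and then substitute the explicit values of $sl$ supplied by Proposition \ref{pro2}. Since $P$ is a finite connected polyhedron, its fundamental group $\pi_1(P)$ is finitely presented, hence finitely generated and in particular countable. By hypothesis $\pi_1(P)$ is elementary amenable with $cd(\pi_1(P))<\infty$, so its Hirsch length $h(\pi_1(P))$ is finite; this is exactly the input from \cite[Theorem 5]{Hil} already invoked in the proof of Proposition \ref{pro2}(3). Thus all the assumptions of Proposition \ref{pro2}(3) are met, and we obtain $sl(\pi_1(P))=h(\pi_1(P))<\infty$.

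Next I would handle the higher homology of the universal cover. By assumption each $H_i(\tilde{P})$ with $i\geq 2$ is a finitely generated abelian group, so Proposition \ref{pro2}(2) applies directly and gives $sl(H_i(\tilde{P}))=n_i$, where $n_i$ is the number of nonzero direct summands in the canonical form of $H_i(\tilde{P})$. In particular each $sl(H_i(\tilde{P}))$ is finite. Note that here, unlike in Corollary \ref{finite}, no appeal to Lemma \ref{lem2} is needed, because finite generation of the $H_i(\tilde{P})$ is part of the hypothesis rather than something to be deduced from finiteness of $\pi_1(P)$.

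With both finiteness conditions $sl(\pi_1(P))<\infty$ and $sl(H_i(\tilde{P}))<\infty$ (for $i\geq 2$) established, Theorem \ref{MainTheorem} applies and yields
$$ D(P)\leq sl(\pi_1(P))+\sum_{i=2}^{n} sl(H_i(\tilde{P})). $$
Substituting $sl(\pi_1(P))=h(\pi_1(P))$ and $sl(H_i(\tilde{P}))=n_i$ then gives the claimed bound $D(P)\leq h(\pi_1(P))+\sum_{i=2}^{n}n_i$.

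This corollary is essentially a bookkeeping assembly of the earlier results, so I do not expect a genuine obstacle inside the proof itself. The only point requiring a little care is confirming that the countability and finite–Hirsch-length hypotheses needed to invoke Proposition \ref{pro2}(3) are genuinely guaranteed by the standing assumptions: countability follows from finite presentation of $\pi_1(P)$, and finiteness of $h(\pi_1(P))$ follows from $cd(\pi_1(P))<\infty$. In fact the Remark following Proposition \ref{pro2} observes that the countability assumption in \cite{Hil} may be dropped altogether, which removes any lingering concern on that front.
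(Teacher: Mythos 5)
Your proposal is correct and follows exactly the route the paper intends: the paper states that Corollaries \ref{abelian}--\ref{amen} ``are deduced from Proposition \ref{pro2} and Theorem \ref{MainTheorem},'' and your argument simply spells out that deduction, checking countability and finiteness of the Hirsch length so that Proposition \ref{pro2}(3) gives $sl(\pi_1(P))=h(\pi_1(P))$ and Proposition \ref{pro2}(2) gives $sl(H_i(\tilde{P}))=n_i$. No discrepancy with the paper's (implicit) proof.
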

Recently, in \cite{D2018}, Ko\l{}odziejczyk  proved that  2-dimensional polyhedra whose fundamental groups are elementary amenable with finite cohomological dimension have finite depth.  In the sequel, we are going to present upper bounds for such polyhedra. 

Recall that if $P$ is a polyhedron of dimension $n$, then $H_n (P)$ is free abelian (see, for example, \cite[Theorem 7.24]{R}).  Now we  state our second main result.
\begin{theorem}\label{MainTheorem1}
If $P$ is a 2-dimensional polyhedron and $sl(\pi_1 (P))<\infty$, then $D(P)\leq  sl\big(\pi_1 (P)\big)+rank \big( H_2 (P)\big)$.
\end{theorem}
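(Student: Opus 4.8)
The plan is to mirror the strategy of Theorem~\ref{MainTheorem}, but to exploit the special structure of a $2$-dimensional polyhedron to reduce the relevant homological data to a single invariant, namely $\mathrm{rank}(H_2(P))$. First I would consider an arbitrary chain $\cdots < X_{j+1} < X_j < \cdots < X_1 < X_0 = P$ and recall that each $X_j \leqslant P$ forces $\dim(\tilde{X}_j) = \dim(\tilde{P}) = 2$, so that for $i \geq 3$ we automatically have $H_i(\tilde{X}_j) = 0$. Thus the only homology group above the fundamental group that can vary along the chain is $H_2(\tilde{X}_j)$, and the analysis of Theorem~\ref{MainTheorem} collapses to the two invariants $\pi_1(X_j)$ and $H_2(\tilde{X}_j)$.

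The key step is to replace $H_2(\tilde{X}_j)$ by $H_2(X_j)$. Since $\tilde{X}_j$ is the universal cover, it is simply connected, so $\pi_2(\tilde{X}_j) \cong H_2(\tilde{X}_j)$ by Hurewicz, and $\pi_2(\tilde{X}_j) \cong \pi_2(X_j)$ since covering maps induce isomorphisms on $\pi_i$ for $i \geq 2$. Because $\tilde{X}_j$ is $2$-dimensional, $H_2(\tilde{X}_j)$ is free abelian (by the fact recalled just before the theorem, that the top homology of an $n$-polyhedron is free abelian). Hence $H_2(\tilde{X}_j)$ is a free abelian group, and its $sl$ equals its rank by Proposition~\ref{pro2}(2). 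The point is then to relate this rank to $\mathrm{rank}(H_2(X_j))$, which in turn is bounded by $\mathrm{rank}(H_2(P))$ along the chain; I would argue that the retraction structure coming from $X_{j+1} \leqslant X_j$ makes $\mathrm{im}(H_2(\tilde{u}_{\tilde{X}_{j+1}}))$ a retract of $H_2(\tilde{X}_j)$, a free abelian group, so that a strict drop in this retract yields a strict drop in rank, exactly as the $sl$ bookkeeping demands.

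Having secured these two invariants, I would run the same dichotomy as in Theorem~\ref{MainTheorem}: if both $\mathrm{im}(\pi_1(u_{X_{j+1}})) = \pi_1(X_j)$ and $\mathrm{im}(H_2(\tilde{u}_{\tilde{X}_{j+1}})) = H_2(\tilde{X}_j)$, then both induced maps are isomorphisms and, since $H_i(\tilde{X}_j) = 0$ for $i \geq 3$ automatically, the Whitehead Theorem forces $d_{X_{j+1}}$ to be a homotopy equivalence, contradicting $X_{j+1} < X_j$. Therefore at each strict step either $sl(\pi_1(X_j))$ or $\mathrm{rank}(H_2(\tilde{X}_j))$ strictly decreases. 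Since the former is bounded by $sl(\pi_1(P))$ and the latter is bounded by $\mathrm{rank}(H_2(P))$, the chain must terminate in a homotopically trivial complex after at most $sl(\pi_1(P)) + \mathrm{rank}(H_2(P))$ steps, giving the claimed bound on $D(P)$.

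The main obstacle I anticipate is the passage from $H_2(\tilde{P})$ to $H_2(P)$, i.e.\ justifying that $\mathrm{rank}(H_2(\tilde{X}_j))$ is controlled by $\mathrm{rank}(H_2(P))$ rather than by $\mathrm{rank}(H_2(\tilde{P}))$ as in the general theorem. The cleanest route is probably to observe that for a $2$-complex $X$, the universal-cover second homology $H_2(\tilde{X}) \cong \pi_2(X)$ is a free $\mathbb{Z}\pi_1$-module image, and to verify directly that the retract of free abelian groups $\mathrm{im}(H_2(\tilde{u}))$ has rank strictly below $\mathrm{rank}(H_2(\tilde{X}_j))$ precisely when it is a proper retract; combining this with the termination argument then caps the number of homological descents by $\mathrm{rank}(H_2(P))$ via the relationship between $H_2(P)$ and the $\pi_1$-coinvariants of $H_2(\tilde{P})$. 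Care is needed to ensure these rank inequalities chain together correctly and that the two independent descending quantities jointly bound the chain length.
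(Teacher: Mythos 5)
Your plan has a genuine gap at exactly the point you flag as ``the main obstacle,'' and that obstacle is not a technicality but the whole difficulty of the theorem. The descending invariant you propose to track, $\mathrm{rank}\big(H_2(\tilde{X}_j)\big)$, is in general \emph{infinite} when $\pi_1(X_j)$ is infinite: for example $H_2\big(\widetilde{S^1\vee S^2}\big)$ is free abelian of countably infinite rank (the paper's Example~3 points out precisely that $H_2(\tilde{P}_1)$, $H_2(\tilde{P}_2)$ are infinitely generated, which is why Corollaries~\ref{abelian} and \ref{free} do not apply there). A proper retract of an infinite-rank free abelian group can be isomorphic to the whole group, so ``proper retract $\Rightarrow$ strict rank drop'' fails, and there is no inequality of the form $\mathrm{rank}\,H_2(\tilde{X})\leq \mathrm{rank}\,H_2(X)$; the Hopf exact sequence $\pi_2(X)\to H_2(X)\to H_2(\pi_1(X))\to 0$ shows that the passage from $H_2(\tilde{X})\cong\pi_2(X)$ to $H_2(X)$ loses and gains information in a way that depends on $\pi_1$, which itself changes along the chain. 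Your closing sketch (``via the relationship between $H_2(P)$ and the $\pi_1$-coinvariants of $H_2(\tilde{P})$'') is an accurate description of what would need to be proved, but it is left entirely unproved, and it is essentially the content of the nontrivial external result the argument actually requires.

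The paper's proof avoids the universal cover altogether. It tracks the pair $\big(\pi_1(X_i),\,H_2(X_i)\big)$, noting that $H_2(X_i)$ is (isomorphic to) a retract of $H_2(P)$, which is finitely generated free abelian because $P$ is $2$-dimensional, so proper retracts do force a strict rank drop there. In the case where both $\pi_1(u_{X_{i+1}})$ and $H_2(u_{X_{i+1}})$ are onto, it deduces $H_1(X_i)\cong H_1(X_{i+1})$ and $H_2(X_i)\cong H_2(X_{i+1})$, hence $\chi(X_i)=\chi(X_{i+1})$, and then invokes Ko\l{}odziejczyk's theorem \cite[Theorem 2]{D2018} --- a homotopy domination of $2$-dimensional polyhedra inducing an isomorphism of fundamental groups (under these conditions) is a homotopy equivalence --- in place of the Whitehead-theorem step. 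If you want to complete your argument, you must either import that theorem as the paper does, or reprove its content; the Whitehead theorem applied to universal-cover homology cannot be made to close the argument with the finite invariant $\mathrm{rank}\,H_2(P)$.
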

\begin{proof}
Consider the following chain of CW-complexes:
$$
\cdots < X_{i+1}<X_i <\cdots <X_3 <X_2 <X_1 <X_0 =P.
$$
 Let $d_{X_{i+1}}:X_i \to X_{i+1}$ and $u_{X_{i+1}} :X_{i+1}\to X_i$ be the domination of $X_i$ over $X_{i+1}$ and the converse map, i.e., $d_{X_{i+1}} u_{X_{i+1}}\simeq id_{X_{i+1}}$. Then $\pi_1 (d_{X_{i+1}}) \pi_1 (u_{X_{i+1}})= id_{\pi_1 (X_{i+1})}$ and  $H_2 (d_{X_{i+1}}) H_2 (u_{X_{i+1}})= id_{H_2 (X_{i+1})}$. As a result, ${\rm im}\pi_1 (u_{X_{i+1}})$ and ${\rm im}H_2 (u_{X_{i+1}})$ are retracts of $\pi_1 (X_i)$ and $H_2 (X_i)$.  
 
Assume that ${\rm im}\big(\pi_1 (u_{X_{i+1}})\big)=\pi_1 (X_i)$ and  ${\rm im}\big(H_2 (u_{X_{i+1}})\big)=H_2 (X_i)$. Then $\pi_1 (u_{X_{i+1}})$ and $H_2 (u_{X_{i+1}})$ are isomorphisms. Accordingly, $\pi_1 (d_{X_{i+1}})$ is an isomorphism and $H_2 (X_i)\cong H_2 (X_{i+1})$. since $\pi_1 (X_i)\cong \pi_1 (X_{i+1})$, we have  $H_1 (X_i)\cong H_1 (X_{i+1})$ by the Hurewicz Theorem (see \cite[Theorem 4.29]{R}). This fact and  $H_2 (X_i)\cong H_2 (X_{i+1})$ imply that $\chi (X_i)=\chi (X_{i+1})$, where $\chi (X)$ denotes the Euler-Poincare  characteristic of polyhedron $X$ of dimension $m$. Now since $d_{X_{i+1}}:X_i \to X_{i+1}$ is a homotopy domination between $X_i$ and $X_{i+1}$ that induces an isomorphism on the fundamental groups,  it is a homotopy equivalence (by \cite[Theorems 2]{D2018}) which contradicts  $X_{i+1} <X_i$. Therefore, either ${\rm im}\big(\pi_1 (u_{X_{i+1}})\big)$ is a proper retract of $\pi_1 (X_i)$ or  ${\rm im}\big(H_2 (u_{X_{i+1}})\big)$ is a proper retract of $H_2 (X_i)$.  So by Lemma \ref{lem1}, $sl\Big({\rm im}\big(\pi_1 (u_{X_{i+1}})\big)\Big)<sl\big( \pi_1 (X_i)\big)$ or $rank\Big( {\rm im}\big(H_2 (u_{X_{i+1}})\big)\Big)<rank\big( H_2 (X_i)\big)$. Since $\pi_1 (X_{i+1})\cong {\rm im}\big(\pi_1 (u_{X_{i+1}})\big)$ and $H_2 (X_{i+1})\cong {\rm im}\big(H_2 (u_{X_{i+1}})\big)$, we have by Lemma \ref{must} that   $sl\big( \pi_1 (X_{i+1})\big)<sl\big(\pi_1 (X_i)\big)$ or $rank\big( H_2 (X_{i+1})\big)<rank\big( H_2 (X_i)\big)$. Thus for $i_0:=sl\big(\pi_1 (P)\big)+rank \big( H_2 (P)\big)$, we have $sl(\pi_1 (X_{i_0}))=0$ and $rank(H_2 (X_{i_0}))=0$ which means by Lemma \ref{must} that $\pi_1 (X_{i_0})=1$ and $H_2 (X_{i_0})=0$, Hence,  $X_{i_0}$ is homotopically trivial and so  the proof is finished. 
\end{proof}
In the next three  corollaries, we present upper bounds for the depth of 2-dimensional polyhedra with abelian, free or elementary amenable fundamental group of finite cohomological dimension. Their proofs are deduced from Proposition \ref{pro2} and Theorem \ref{MainTheorem1}.
\begin{corollary}\label{amen2}
If $P$ is a 2-dimensional polyhedron with elementary amenable fundamental group of finite cohomological dimension, then $D(P)\leq h\big( \pi_1 (P)\big) +rank\big( H_2(P)\big)$.
\end{corollary}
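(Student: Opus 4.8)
The plan is to obtain this statement as a direct corollary of Theorem \ref{MainTheorem1} together with Proposition \ref{pro2}(3), exactly as the paper signals. The only substantive thing to check is that the hypothesis of Theorem \ref{MainTheorem1}, namely $sl(\pi_1(P))<\infty$, is actually guaranteed by the assumptions made here; once that is in hand, the result follows by rewriting $sl(\pi_1(P))$ as a Hirsch length.

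First I would record the countability of the fundamental group. Since $P$ is a finite connected polyhedron, $\pi_1(P)$ is finitely presented (by \cite[Corollary 7.37]{R}) and hence countable, so the countability hypothesis of Proposition \ref{pro2}(3) is satisfied. Writing $G:=\pi_1(P)$, we are told that $G$ is elementary amenable with $cd(G)<\infty$.

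Next I would verify that $sl(G)<\infty$. Because $cd(G)<\infty$, the Hirsch length $h(G)$ is finite (by \cite[Theorem 5]{Hil}, the same fact invoked in the proof of Proposition \ref{pro2}(3)). Proposition \ref{pro2}(3) then yields $sl(G)=h(G)<\infty$, which is precisely the finiteness needed to apply Theorem \ref{MainTheorem1}.

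Finally, having confirmed $sl(\pi_1(P))<\infty$, I would invoke Theorem \ref{MainTheorem1} and substitute $sl=h$ to conclude
$$D(P)\leq sl\big(\pi_1(P)\big)+rank\big(H_2(P)\big)=h\big(\pi_1(P)\big)+rank\big(H_2(P)\big),$$
which is the desired bound. I do not expect any genuine obstacle in this argument: all the real content sits in Theorem \ref{MainTheorem1} (whose proof rests on the Hurewicz theorem, invariance of the Euler characteristic, and the homotopy-equivalence criterion of \cite[Theorem 2]{D2018}) and in the identity $sl=h$ of Proposition \ref{pro2}(3). The corollary itself is a bookkeeping step that merely specializes the established inequality to the elementary amenable case.
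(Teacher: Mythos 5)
Your proposal is correct and matches the paper's intended argument exactly: the paper derives Corollary \ref{amen2} as an immediate consequence of Proposition \ref{pro2}(3) and Theorem \ref{MainTheorem1}, and your verification that $sl(\pi_1(P))=h(\pi_1(P))<\infty$ (via finite presentability for countability and \cite[Theorem 5]{Hil} for finiteness of the Hirsch length) is precisely the bookkeeping the paper leaves implicit.
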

\begin{corollary}\label{abelian2}
If $P$ is a 2-dimensional polyhedron with abelian fundamental group, then $D(P)\leq n +rank\big( H_2(P)\big)$, where $n$ is  the number of nonzero direct summands  in the canonical form of $\pi_1 (P)$.
\end{corollary}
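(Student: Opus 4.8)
The plan is to obtain this as a direct consequence of Theorem \ref{MainTheorem1} combined with Proposition \ref{pro2}(2); no new topological input is required beyond these two results. First I would observe that since $P$ is a connected finite polyhedron, its fundamental group $\pi_1(P)$ is finitely presented by \cite[Corollary 7.37]{R}, and in particular finitely generated. Together with the hypothesis that $\pi_1(P)$ is abelian, this places $\pi_1(P)$ in the setting of the fundamental theorem of finitely generated abelian groups, so $\pi_1(P)$ admits a canonical decomposition with some finite number $n$ of nonzero direct summands.

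Next I would invoke Proposition \ref{pro2}(2), which identifies $sl(\pi_1(P))$ precisely with this number $n$. The consequence is twofold: on the one hand $sl(\pi_1(P)) = n < \infty$, so the finiteness hypothesis of Theorem \ref{MainTheorem1} is met; on the other hand the value of the splitting length is made fully explicit. With the hypothesis verified, the final step is to apply Theorem \ref{MainTheorem1} to the $2$-dimensional polyhedron $P$, which gives $D(P) \leq sl(\pi_1(P)) + rank(H_2(P))$. Substituting $sl(\pi_1(P)) = n$ then yields the claimed bound $D(P) \leq n + rank(H_2(P))$.

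Since both ingredients are already established, the argument is essentially an assembly rather than a new computation, and I would not expect any genuine obstacle. The only link between hypothesis and conclusion that warrants explicit attention is confirming that the finiteness assumption $sl(\pi_1(P)) < \infty$ demanded by Theorem \ref{MainTheorem1} holds automatically in this case; this is exactly what Proposition \ref{pro2}(2) guarantees for the finitely generated abelian group $\pi_1(P)$. I would therefore state that verification explicitly rather than leave it implicit, as it is the sole nontrivial point in the deduction.
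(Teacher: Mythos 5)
Your proposal is correct and matches the paper's own (implicit) proof exactly: the paper states that Corollaries \ref{amen2}--\ref{free2} are deduced from Proposition \ref{pro2} and Theorem \ref{MainTheorem1}, which is precisely your assembly of $sl(\pi_1(P))=n<\infty$ via Proposition \ref{pro2}(2) followed by an application of Theorem \ref{MainTheorem1}. Your explicit remark that $\pi_1(P)$ is finitely generated (being finitely presented by \cite[Corollary 7.37]{R}) is a worthwhile clarification the paper leaves implicit.
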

 Ko\l{}odziejczyk  \cite{D2018} showed that 2-dimensional polyhedra whose fundamental groups are limit groups have finite depth (see \cite[Corollary 3]{D2018}). The class of limit groups  which is a generalization of the class of free groups are always finitely presented. In the following, we present an upper bound for 2-dimensional polyhedra with free fundamental group.
\begin{corollary}\label{free2}
If $P$ is a 2-dimensional polyhedron with free fundamental group, then $D(P)\leq rank\big( \pi_1 (P)\big) +rank\big( H_2(P)\big)$.
\end{corollary}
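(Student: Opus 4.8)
The plan is to reduce the statement directly to Theorem \ref{MainTheorem1}, using the free-group case of Proposition \ref{pro2} to rewrite the splitting length $sl\big(\pi_1(P)\big)$ as the rank of $\pi_1(P)$.

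First I would record that $\pi_1(P)$ is a free group of \emph{finite} rank. This is where the standing hypotheses enter: since $P$ is a connected finite polyhedron, $\pi_1(P)$ is finitely presented (as recalled just before Corollary \ref{abelian}), hence finitely generated; and a finitely generated free group has finite rank. Consequently Proposition \ref{pro2}(1) applies and yields
\[
sl\big(\pi_1(P)\big)=rank\big(\pi_1(P)\big)<\infty .
\]
In particular the finiteness hypothesis $sl\big(\pi_1(P)\big)<\infty$ of Theorem \ref{MainTheorem1} is met.

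With that in hand, the second step is simply to invoke Theorem \ref{MainTheorem1} for the $2$-dimensional polyhedron $P$, which gives $D(P)\leq sl\big(\pi_1(P)\big)+rank\big(H_2(P)\big)$. Substituting the identity from the previous step produces exactly the asserted bound $D(P)\leq rank\big(\pi_1(P)\big)+rank\big(H_2(P)\big)$. Here $rank\big(H_2(P)\big)$ is meaningful because, for a polyhedron of dimension $2$, the top homology $H_2(P)$ is free abelian, as noted before Theorem \ref{MainTheorem1}.

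There is no genuine obstacle in this argument, as the result is an immediate specialization: the whole content is carried by Theorem \ref{MainTheorem1} and by the computation of $sl$ for free groups in Proposition \ref{pro2}. The only point warranting a word of care is the verification that $\pi_1(P)$ has finite rank, so that Proposition \ref{pro2}(1) is applicable; this is guaranteed by the finiteness of $P$ through finite presentability of its fundamental group. Everything else is a direct substitution.
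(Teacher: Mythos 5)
Your proposal is correct and matches the paper's intended argument exactly: the paper states that Corollaries \ref{amen2}--\ref{free2} are ``deduced from Proposition \ref{pro2} and Theorem \ref{MainTheorem1},'' which is precisely your reduction, and your added remark that finite presentability of $\pi_1(P)$ guarantees finite rank is a correct (and welcome) elaboration of a detail the paper leaves implicit.
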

\section{Examples}
In this section, we provide some examples to show that some of upper bounds presented in the previous section are sharp. 
\begin{example}
Consider  $P=\bigvee_{i\in I} (\vee_{r_n} S^i)$, where $\vee_{r_i} S^i$ denotes the wedge sum of $r_i$ copies of $S^i$, $I$ is a finite subset of $\mathbb{N}\setminus \{ 1\}$ and $r_i \in \mathbb{N}$. Then $P$ is a simply connected polyhedron with   $H_i (P)\cong \mathbb{Z}^{r_i}$, for $i\geq 2$. Hence,  $n_i=r_i$ for $i\geq 2$, and so by Corollary \ref{simply}, $D(P)\leq \sum_{i\in I}r_i $. 

Note that by \cite[Theorems 3.3]{Mo}, every topological space homotopy dominated by $\bigvee_{i\in I} (\vee_{r_i} S^i)$ has the same homotopy type of a CW-complex of the form $\bigvee_{i\in I} (\vee_{s_i} S^i)$, where  $0\leq s_i \leq r_i$. This implies that $D(P)=\sum_{i\in I}r_i $ since the system
 $$
1<S^2 <S^2 \vee S^2 <\cdots  <\vee_{r_2} S^2<\vee_{r_2} S^2 \vee S^3<\cdots <\bigvee_{i\in I} (\vee_{r_i} S^i)  = P
 $$
is a chain of length $\sum_{i\in I}r_i $ for $P$. This fact demonstrates that the upper bound presented in Corollary \ref{finite} (also Corollary \ref{simply}) is sharp. 
\end{example}
\begin{example}
Let $P=\prod_{n\in I}(\prod_{r_n}S^n)$, where $\prod_{r_n}S^n$ denotes the product of $r_n$ copies of $S^n$, $I$ is a finite subset of $\mathbb{N}$ and $r_n \in \mathbb{N}$. Then $P$ is a finite polyhedron with free abelian $\pi_1 (P)\cong \mathbb{Z}^{r_1}$ and that   $\tilde{P}=\prod_{n\in I\setminus \{ 1\}}(\prod_{r_n}S^n)\times \mathbb{R}^{r_1}\simeq \prod_{n\in I\setminus \{ 1\}}(\prod_{r_n}S^n)$.  By the K\"{u}nneth formula, each $H_i (\tilde{P})$ is a free abelian group, so we only need to keep track of the rank. In general, the rank of $H_i (\tilde{P})$ (which is $n_i$) is the coefficient of $x^i$ in the Poincare polynomial $\prod_{n\in I\setminus \{ 1\}}(1+x^{n})^{r_n}$ of $\tilde{P}$,  for $i\geq 2$. Hence, by Corollary \ref{abelian}   we have $D(P)\leq r_1 + \sum_{i=2}^{\dim (P)}n_i$, where  $n_i$   is the coefficient of $x^i$ in $\prod_{n\in I\setminus \{ 1\}}(1+x^{n})^{r_n}$, for $i\geq 2$. 

For instance, take $P=S^1 \times S^n$, for $n\geq 2$. On the one hand, by the above argument, $D(P)\leq 2$. On the other hand, by \cite[Theorems 3.6]{Mo1},  $1,S^1 ,S^n$, and $S^1 \times S^n$ are the only spaces homotopy dominated by $P$, and so  $D(P)=2$. This shows that the upper bounds presented in Corollaries \ref{finite} and \ref{abelian} are sharp. 
\end{example}
\begin{example}
Consider 2-dimensional polyhedra $P_1=S^1   \vee S^2$ and $P_2 =S^1 \vee S^1  \vee S^2$. We have  $\pi_1 (P_1)\cong\mathbb{Z}$, $\pi_1 (P_2)\cong\mathbb{Z}\ast \mathbb{Z}$ and $H_2 (P_1)\cong H_2 (P_2)\cong \mathbb{Z}$. Then by Corollaries \ref{abelian2} and \ref{free2}, respectively, we have $D(P_1)\leq 2$ and $D(P_2)\leq 3$. Moreover, from \cite[Theorems 3.3]{Mo} we have $D(P_1)=2$ and $D(P_2)=3$ which shows that the upper bounds given in Corollaries \ref{abelian2} and \ref{free2} are sharp. Note that  $H_2 (\tilde{P}_1)$ and $H_2 (\tilde{P}_2)$ are infinitely generated abelian groups, and so we could not apply Corollaries \ref{abelian} and Corollary \ref{free}  for $P_1$ and $P_2$, respectively.  
\end{example}

\section{Statements and Declarations}
\textbf{Conflict of Interest} All authors state no conflict of interest.

\textbf{Data availability statement} Data sharing is not applicable to this article as no data sets were generated or analysed.

\end{document}